\documentclass[pdftex]{amsart}
\pdfoutput=1
\usepackage{amssymb,amsmath,latexsym,amscd}
\usepackage{color}
\usepackage{graphicx}
\usepackage{bbm}
\usepackage[pdfpagelabels]{hyperref}
\usepackage{tikz,ifthen}

\newtheorem{theorem}{Theorem}[section]
\newtheorem{corollary}[theorem]{Corollary}

\newtheorem{lemma}[theorem]{Lemma}

\theoremstyle{definition}

\newtheorem{remark}[theorem]{Remark}

\newtheorem{example}[theorem]{Example}

\newcommand{\R}{\mathbbm{R}}
\newcommand{\C}{\mathbbm{C}}
\newcommand{\Z}{\mathbbm{Z}}
\newcommand{\N}{\mathbbm{N}}
\newcommand{\Q}{\mathbbm{Q}}
\renewcommand{\P}{\mathbbm{P}}
\newcommand{\tZ}{\tilde Z}
\newcommand{\cstar}{\C^*}
\newcommand{\torus}[1][n]{(\cstar)^{#1}}
\newcommand{\suchthat}{\ \colon \ }

\newcommand{\khov}{Khovansk\u{\i}i}

\DeclareMathOperator{\intr}{relint}
\DeclareMathOperator{\DMV}{DMV}        
\DeclareMathOperator{\ME}{ME}          
\DeclareMathOperator{\cayley}{Cayley}  
\newcommand{\hstar}{h^\star}
\DeclareMathOperator{\ehr}{ehr}        
\DeclareMathOperator{\supp}{supp}

\textwidth=15cm \textheight=22.5cm \topmargin=-1.0cm
\oddsidemargin=0.5cm \evensidemargin=0.5cm \pagestyle{plain}

\title{Discrete Mixed Volume and Hodge-Deligne Numbers}

\author[Di Rocco, Haase, Nill]{Sandra Di Rocco, Christian Haase, Benjamin Nill}

\address{Sandra Di Rocco \\ KTH Stockholm \\ Sweden}
\email{dirocco@math.kth.se}
\vspace{.1in}
\address{Christian Haase \\ Freie Universit\"at Berlin \\ Germany}
\email{haase@math.fu-berlin.de}
\vspace{.1in}
\address{Benjamin Nill \\ OvGU Magdeburg \\ Germany}
\email{benjamin.nill@ovgu.de}

\subjclass[2010]{}

\begin{document}

\begin{abstract}
Generalizing the famous Bernstein-Kushnirenko theorem, \khov\ 
proved in 1978 a combinatorial formula for the arithmetic genus of the
compactification of a generic complete intersection associated to a
family of lattice polytopes.
Recently, an analogous combinatorial formula, called the discrete
mixed volume, was introduced by Bihan and shown to be nonnegative. By
making a footnote of \khov\ in his paper explicit, we interpret
this invariant as the (motivic) arithmetic genus of the non-compact
generic complete intersection associated to the family of lattice
polytopes.
\end{abstract}

\maketitle

\section*{Introduction}
The by now classical Bernstein-Kushnirenko theorem (also called BKK theorem) is a gem linking
solutions of systems of polynomial equations to the combinatorics of
lattice polytopes~\cite{Bernstein}. It states that the number of
common solutions in $\torus$ to $n$ general equations is given by the
mixed volume of the $n$ associated Newton polytopes. The latter can be
computed as the alternating sum of the number of lattice points in
Minkowski sums of these $n$ lattice polytopes.

Recently, there has been significant progress on the combinatorial
side in the case of $k$ lattice polytopes $P_1, \ldots, P_k \subset \R^n$
where $k<n$. For a set of indices $I \subseteq [k] := \{1, \ldots,
k\}$ we write $P_I$ for the Minkowski sum \[P_I := \sum_{i \in I} P_i,\]
and $P_\emptyset := \{0\}$. Bihan~\cite{bihan-2014} calls the alternating sum
\begin{equation*}
  \DMV(P_1, \ldots, P_k) := \sum_{I \subseteq [k]} (-1)^{k-|I|} \ |P_I
  \cap \Z^n|
\end{equation*}
the {\em discrete mixed volume} of $P_1, \ldots, P_k$ and proves that
it is always non-negative. We remark that for $k=n$ this is precisely
the mixed volume. More generally, the polynomial
\[\ME(P_1, \ldots, P_k;m) := \DMV(mP_1, \ldots, mP_k)\] 
is called the {\em mixed Ehrhart
  polynomial}~\cite{HJST-mixed-ehrhart}. We recall that the {\em
  Ehrhart polynomial} of a lattice polytope $P \subset \R^n$ is given
by $\ehr(P)(m) := |m P \cap \Z^n|$ for $m \in \N$. New work by
Jochemko and Sanyal generalizes Bihan's positivity from counting
lattice points to more general valuations~\cite{DMVals}.

In this note, we clarify the algebraic geometric implications of these
combinatorial non-negativity results. We verify in
Theorem~\ref{main} that the discrete mixed volume $\ME(P_1, \ldots, P_k; 1)$ equals what we call, in the spirit of
\cite{Yokura}, the {\em motivic arithmetic genus} of a general complete
intersection in $\torus$ corresponding to $P_1, \ldots, P_k$.
This statement was already hinted at in a footnote of
Khovansk\u{\i}i~\cite[p.~41]{KhovanskiiGenus}. It is natural to ask, more generally, whether or not the motivic
arithmetic genus is non-negative for every smooth subvariety of the torus.

For smooth projective varieties,
this motivic arithmetic genus specializes to the usual arithmetic
genus (see Remark~\ref{rk:usual-genus}). In 1978 Khovansk\u{\i}i
\cite[Theorem~1]{KhovanskiiGenus} proved that in our setting the
arithmetic genus of a compactified smooth complete intersection equals
$\ME(P_1, \ldots, P_k;-1)$. In combination, these two results
may be seen as a motivic reciprocity theorem.

\smallskip 

This note is organized as follows. In Section~\ref{sec1} we recall the Hodge-Deligne polynomial, define the motivic arithmetic genus, and state our main result (Theorem~\ref{main}). Its proof is given in Section~\ref{sec2}.

\section{Genus formulae for complete intersections}
\label{sec1}

In this section, we set the notation, state our main result, and
present some open questions.

\subsection{Hodge-Deligne polynomials and the motivic arithmetic
  genus}\label{hodge-section}
  
Let us recall definition and properties of Hodge-Deligne
polynomials. We refer to \cite{DanilovKhovanskii} for more details.

Given a quasi-projective variety $Y$ over $\C$, the cohomology with
compact supports $H^k_c(Y,\Q) \otimes \C$ carries a natural mixed
Hodge structure~\cite{DeligneMixedHodge}. The dimension of the
$(p,q)$-piece is denoted by $h^{(p,q)}(H^k_c(Y))$ giving rise to the
$(p,q)$ Euler characteristic $$ e^{(p,q)}(Y) := \sum_k (-1)^k \
h^{(p,q)}(H^k_c(Y)) \,. $$
If $Y$ is smooth and projective, the Hodge structure is pure so that
we only have one summand, $e^{(p,q)}(Y) = (-1)^{p+q} \ h^{(p,q)}(Y)$,
the usual Hodge number.

The generating function for these numbers
$$E(Y; u,v) := \sum_{p,q} e^{p,q}(Y) u^p v^q \in \Z[u,v]$$
is the {\em Hodge-Deligne polynomial} (or {\em $E$-polynomial}). 

All we need to know about these polynomial invariants is that they
behave nicely under stratifications.

\begin{theorem} \label{thm:E-polynomial}
  The invariant $E$ factors through the Grothendieck ring, i.e., 
  \begin{enumerate}
  \item $E(\{ \text{point} \}) = 1$\,,
  \item if $X = X_1 \sqcup X_2$ with $X_i \subset X$ locally closed
    for $i=1,2$, then $E(X) = E(X_1)+E(X_2)$\,, and
  \item $E(X_1 \times X_2) = E(X_1) \cdot E(X_2)$\,.
  \end{enumerate}
\end{theorem}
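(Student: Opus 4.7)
The plan is to derive all three properties directly from Deligne's construction of the mixed Hodge structure on $H^\bullet_c(Y,\Q)\otimes\C$. Part (1) is an immediate computation: a point has $H^0_c = \C$ sitting purely in bidegree $(0,0)$ and all higher compactly supported cohomology vanishing, so $e^{0,0}=1$ and all other $e^{p,q}=0$, giving $E(\{\text{point}\};u,v)=1$.

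For part (2), I would start with the special case of a closed subvariety $Z \subset X$ with open complement $U = X \setminus Z$. Deligne's theory produces the long exact sequence in compactly supported cohomology
$$\cdots \to H^k_c(U) \to H^k_c(X) \to H^k_c(Z) \to H^{k+1}_c(U) \to \cdots$$
whose maps are morphisms of mixed Hodge structures. The key input is that every such morphism is strict with respect to both the Hodge and weight filtrations, so that passing to associated gradeds preserves exactness. Consequently the long exact sequence breaks into a long exact sequence of $(p,q)$-pieces for each bidegree, and the alternating sum of dimensions yields $e^{p,q}(X) = e^{p,q}(Z) + e^{p,q}(U)$, hence $E(X) = E(Z)+E(U)$. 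The general case of a locally closed decomposition $X = X_1 \sqcup X_2$ reduces to this by taking $X_2' := X_2 \setminus \overline{X_1}$ open in its closure and iterating, or more cleanly by induction on a stratification refinement in which each stratum is open in the union of it and its successors.

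For part (3), I would invoke the Künneth formula for compactly supported cohomology, which for quasi-projective varieties over $\C$ is an isomorphism of mixed Hodge structures
$$H^k_c(X_1 \times X_2) \;\cong\; \bigoplus_{i+j=k} H^i_c(X_1) \otimes H^j_c(X_2),$$
the bidegree on the tensor product being the sum of the bidegrees of the factors. Taking dimensions of $(p,q)$-components, multiplying by $(-1)^k$, summing over $k$, and recognizing the resulting double sum as a Cauchy product in $u$ and $v$ yields $E(X_1 \times X_2) = E(X_1)\cdot E(X_2)$.

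The main obstacle is not the bookkeeping above but the underlying Hodge-theoretic inputs: the existence of the functorial mixed Hodge structure on $H^\bullet_c$, the compatibility of the open/closed long exact sequence and of Künneth with this structure, and the strictness of morphisms of mixed Hodge structures. As these are classical results of Deligne, I would simply cite them (following the reference \cite{DanilovKhovanskii} already given in the paper) and make only the elementary combinatorial arguments above explicit.
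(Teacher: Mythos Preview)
Your proposal is a correct sketch of the standard argument, but the paper itself does not prove this theorem at all: it is stated as a known fact about Hodge--Deligne polynomials, with the reader referred to \cite{DanilovKhovanskii} and \cite{DeligneMixedHodge} for details. So there is nothing to compare against --- the paper simply quotes the result.

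That said, your outline is exactly the proof one would give if asked to supply one, and it matches the standard treatment in the literature. Part (1) is trivial, part (2) follows from strictness of morphisms of mixed Hodge structures applied to the open/closed long exact sequence in compactly supported cohomology, and part (3) follows from the K\"unneth isomorphism being an isomorphism of mixed Hodge structures. Your remark that the general locally closed decomposition in (2) reduces to the open/closed case is correct; one clean way to phrase this is that both $X_1$ and $X_2$ decompose further into pieces of the form $\overline{X_i} \setminus (\overline{X_i} \setminus X_i)$, and repeated application of the open/closed case handles everything. The only thing I would flag is that your final paragraph is the honest one: the entire content of the theorem lies in Deligne's foundational results, and once those are granted the rest is formal bookkeeping. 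Since the paper treats the theorem as a black box anyway, your level of detail is already more than the paper provides.
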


It follows that $E$ behaves multiplicatively on fibrations.

\begin{corollary}
  If $\pi \colon Y \to X$ is Zariski-locally trivial with fiber $F$,
  then $E(Y) = E(X) \cdot E(F)$.
  \label{fiber}
\end{corollary}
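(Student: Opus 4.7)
The plan is to leverage the additivity (part (2)) and multiplicativity (part (3)) of $E$ from Theorem~\ref{thm:E-polynomial} by decomposing $X$ into locally closed strata over which the fibration $\pi$ is trivial in the product sense.

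First, I would use that $X$ is a variety (hence Noetherian) together with the Zariski-local triviality assumption to extract a \emph{finite} open cover $U_1, \ldots, U_N$ of $X$ such that $\pi^{-1}(U_i) \cong U_i \times F$ as varieties over $U_i$. Then I would turn this open cover into a locally closed stratification by setting
\[
X_1 := U_1, \qquad X_i := U_i \setminus (U_1 \cup \cdots \cup U_{i-1}) \text{ for } i \geq 2,
\]
so that $X = X_1 \sqcup \cdots \sqcup X_N$ is a disjoint union of locally closed subsets. Since $X_i \subseteq U_i$ and the trivialization over $U_i$ restricts to any subset, we obtain $\pi^{-1}(X_i) \cong X_i \times F$.

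Now I would apply Theorem~\ref{thm:E-polynomial} twice. By iterated additivity (part (2)),
\[
E(Y) = \sum_{i=1}^N E(\pi^{-1}(X_i)),
\]
and similarly $E(X) = \sum_{i=1}^N E(X_i)$. Using the product isomorphisms $\pi^{-1}(X_i) \cong X_i \times F$ and multiplicativity (part (3)) gives $E(\pi^{-1}(X_i)) = E(X_i) \cdot E(F)$. Factoring out $E(F)$ from the sum yields
\[
E(Y) = \Bigl( \sum_{i=1}^N E(X_i) \Bigr) \cdot E(F) = E(X) \cdot E(F),
\]
as desired.

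The only genuine obstacle is the passage from an open cover to a locally closed stratification together with the verification that the trivializations restrict; this is the step where one must be careful, but it is immediate from the definition of the product structure. Everything else is a bookkeeping exercise in the Grothendieck ring, which is exactly what Theorem~\ref{thm:E-polynomial} was designed to handle.
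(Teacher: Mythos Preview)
Your argument is correct and is exactly the standard unpacking the paper has in mind: the paper does not spell out a proof at all, merely asserting that the corollary ``follows'' from Theorem~\ref{thm:E-polynomial}, and your stratify-then-multiply argument is the intended justification.
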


\begin{example}{\rm 
Using these tools, we can compute the $E$-polynomial for toric
varieties $X(\Sigma)$ from the $f$-vector of the defining fan
$\Sigma$.
As $\P^1$ is smooth and projective with Betti numbers $(1,0,1)$, we
must have $E(\P^1) = uv+1$ whence $E(\C^* = \P^1 \setminus
\{0,\infty\}) = E(\P^1) -2 = uv-1$ so that $E((\C^*)^d) = (uv-1)^d$.
Using the stratification of $X(\Sigma)$ by tori, we get
$$E(X(\Sigma)) = \sum_d f_d(\Sigma) \ (uv-1)^{n-d} \,.$$
}
\end{example}

We define 
$$e^{p,+}(Y) := \sum_q e^{p,q}(Y) \in \Z$$
the {\em $p^{\rm th}$ $\chi_y$-characteristic}
. In particular, $e^{0,+}(Y)=E(Y;0,1)$. We denote
$$(-1)^{\dim(Y)} E(Y;0,1)$$ 
as the {\em motivic arithmetic genus} of a (not necessarily compact)
variety $Y$.

\begin{remark} \label{rk:usual-genus}
  In the traditional situation of a nonsingular
  projective variety $X$, we use the term {\em arithmetic genus}
  instead of motivic arithmetic genus. We remark that \khov\ uses in
  \cite{KhovanskiiGenus} the term `arithmetic genus' for $E(X; 0,1)$
  while it refers to $(-1)^{\dim(X)} (E(X;0,1) - 1)$ in 
  Hartshorne~\cite[III,Ex.5.3]{Hartshorne}. We prefer the above definition as it
  will fit nicely to the combinatorial notion. Observe that by
  using the birational invariance of the arithmetic genus, \khov\ 
  defines in \cite{KhovanskiiGenus} even the arithmetic genus of
  non-compact varieties as the arithmetic genus of some/any
  smooth projective compactification.
\end{remark}

\subsection{Our setup}
\label{notation}

Let $P_1, \ldots, P_k \subset \R^n$ be lattice polytopes, where we do
not impose any additional restrictions on their dimensions.
Let $f_1, \ldots, f_k \in \C[x_1,\ldots, x_n]$ be a generic $k$-tuple
of Laurent polynomials with Newton polytopes $P_1, \ldots, P_k$,
respectively. Then we denote by $Y$ the associated complete intersection in
$\torus$ of dimension $n-r$ defined by $f_1 = \cdots = f_k=0$. We
choose a compactification $\bar{Y}$ in a nonsingular projective toric
variety such that $\bar{Y}$ is nonsingular and intersects all torus
orbits transversally.

In the simplest example $k=1$, $n=2$, of a single polynomial in two
variables, $\bar Y$ will be a smooth curve of genus $g = | \intr{(P)}
\cap \Z^2 |$ so that $E(\bar Y) = uv - g(u+v) +1$ while $Y$ has
$b = | \partial P \cap \Z^2 |$ points removed, so that $E(Y) = uv -
g(u+v) - (b-1)$.

\subsection{The main result}

Let us recall \khov's formula for the arithmetic genus of
$\bar{Y}$. Here, $\intr(P)$ denotes the relative interior of a
polytope $P$. Recall that we agreed on $P_\emptyset = \{0\}$ so that
$|\intr(P_\emptyset) \cap \Z^n| = 1$. 

\begin{theorem}[\khov~\cite{KhovanskiiGenus}]
In the notation of Subsection~\ref{notation}, for $Y \not=\emptyset$,
$$(-1)^{\dim(Y)} E(\bar{Y};0,1) = \sum_{I \subseteq [k]}
(-1)^{\dim(P_I)-|I|} |\intr(P_I) \cap \Z^n|.$$
\label{kgenus}
\end{theorem}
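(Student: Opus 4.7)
The plan is to combine a torus-orbit stratification of $\bar Y$ with the paper's main theorem (Theorem~\ref{main}), which computes the motivic arithmetic genus of a generic complete intersection in a torus as its discrete mixed volume. Concretely, fix the smooth projective toric compactification $X_\Sigma$ of $\torus$ into which $\bar Y$ embeds transversely. Then one obtains the locally closed stratification
$$\bar Y \;=\; \bigsqcup_{\sigma \in \Sigma} \bar Y \cap O_\sigma,$$
and each $\bar Y \cap O_\sigma$ is a generic complete intersection in the torus $O_\sigma \cong \torus[n-\dim\sigma]$, cut out by the initial forms of $f_1, \ldots, f_k$ along $\sigma$. Their Newton polytopes are the faces $P_1^\sigma, \ldots, P_k^\sigma$ of $P_1, \ldots, P_k$ selected by $\sigma$, viewed in the appropriate quotient lattice.

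By additivity of the Hodge-Deligne polynomial (Theorem~\ref{thm:E-polynomial}), combined with Theorem~\ref{main} applied to each stratum, I would write
$$E(\bar Y; 0, 1) \;=\; \sum_{\sigma \in \Sigma} E(\bar Y \cap O_\sigma; 0, 1) \;=\; \sum_{\sigma} (\pm)\,\DMV\bigl(\{P_i^\sigma\}_{i : P_i^\sigma \neq \{0\}}\bigr),$$
where only those strata on which no defining polynomial has degenerated to a nonzero constant contribute nontrivially. Expanding each $\DMV$ as its defining alternating sum of lattice point counts of Minkowski subsums produces a double sum indexed by pairs $(\sigma, J)$ with $J \subseteq [k]$.

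The combinatorial heart of the argument is to reorganize this double sum into the single alternating sum over $I \subseteq [k]$ appearing on the right. The key observation is that every face of $P_I = \sum_{i \in I} P_i$ arises as a Minkowski subsum $\sum_{i \in I} P_i^\sigma$ supported by a unique cone $\sigma$ of the normal fan, and that the lattice points of $P_I$ decompose disjointly into the relative-interior lattice points of its faces. A Möbius inversion on the face lattice then converts the double sum $\sum_\sigma \sum_J (\pm)\,|P_J^\sigma \cap \Z^{n-\dim\sigma}|$ into the single sum $\sum_I (-1)^{\dim P_I - |I|}\,|\intr(P_I) \cap \Z^n|$ with the prescribed signs.

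The main obstacle will be the sign bookkeeping: matching dimensions $\dim P_I^\sigma$ measured in the quotient lattice with $\dim P_I$ and $\dim \sigma$, identifying precisely which strata contribute (since some $P_i^\sigma$ may collapse to a single point), and confirming that the collapsed alternating sum recovers the exponent $\dim P_I - |I|$ on the right. A useful sanity check is to compare intermediate expressions against Khovanskii's equivalent formulation $\ME(P_1, \ldots, P_k; -1)$ mentioned in the introduction, translated via Ehrhart reciprocity, which pins down the sign conventions unambiguously.
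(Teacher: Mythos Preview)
The paper does not supply its own proof of this statement: Theorem~\ref{kgenus} is quoted from \khov~\cite{KhovanskiiGenus} as background and motivation for the non-compact analogue, Theorem~\ref{main}, so there is no in-paper argument to compare your proposal against.

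That said, your strategy is sound and non-circular. The proof of Theorem~\ref{main} in Section~\ref{sec2} proceeds via the Cayley trick and the Danilov--\khov\ hypersurface formula and never invokes Theorem~\ref{kgenus}, so deducing the compact formula from the non-compact one through the orbit decomposition $\bar Y=\bigsqcup_{\sigma}(\bar Y\cap O_\sigma)$ is legitimate. The combinatorial step you single out is exactly the crux: after expanding each stratum's discrete mixed volume and interchanging the sums over $\sigma$ and $J\subseteq[k]$, what one needs for each fixed $J$ is the identity
\[
\sum_{\sigma\in\Sigma}(-1)^{\dim O_\sigma}\,\bigl|P_J^{\sigma}\cap\Z^n\bigr|\;=\;(-1)^{\dim P_J}\,\bigl|\intr(P_J)\cap\Z^n\bigr|,
\]
which follows from the Euler-characteristic fact that, in a complete fan $\Sigma$ refining the normal fan of $P_J$, the cones $\sigma$ with $\intr(\sigma)\subset\intr\bigl(N_F(P_J)\bigr)$ satisfy $\sum_\sigma(-1)^{\dim\sigma}=(-1)^{\dim N_F}$, together with M\"obius inversion on the face lattice of $P_J$. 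Your remark that strata where some $P_i^\sigma$ collapses to a point contribute zero is consistent with this, since $\DMV$ vanishes whenever one of its arguments is a point. This route is genuinely different from \khov's original sheaf-cohomological computation, and it has the appealing feature of turning the ``motivic reciprocity'' between $\ME(\,\cdot\,;1)$ and $\ME(\,\cdot\,;-1)$ mentioned in the introduction into a direct corollary of Theorem~\ref{main}.
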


Ehrhart-Macdonald reciprocity implies $\ehr(P_I)(-1) =
(-1)^{\dim(P_I)} |\intr(P_I) \cap \Z^n|$, see \cite{Beck}. Hence, 
\[(-1)^{\dim(Y)} E(\bar{Y};0,1) = \ME(P_1, \ldots, P_k;-1).\]

We remark that in contrast to the geometric genus the arithmetic genus
is not necessarily nonnegative. For instance, choose $P_1$ with
vertices $(0,0,0),(a,0,0),(0,a,0)$, and
$P_2$ with vertices $(0,0,1), (1,0,1)$. Then $\ME(P_1, P_2;-1) <\!\!<0$
for $a >\!\!> 0$.

\smallskip

As \khov\ indicated in the footnote on p.41 of
\cite{KhovanskiiGenus}, there is a corresponding result in the
non-compact situation. Here, for $\beta \in \Z_{\ge 0}^I$, we define
$|\beta| := \sum_{i\in I} \beta_i$. Moreover, we set $\Z^\emptyset :=
\{0\}$.

\begin{theorem}\label{main}
In the notation of Subsection~\ref{notation},
$$e^{p,+}(Y) = (-1)^{n-p}\sum_{I \subseteq [k]} (-1)^{|I|}
\left(\sum_{\substack{\beta \in \Z_{\ge 0}^I \\
      |\beta|\le p}} (-1)^{|\beta|} \binom{n+|I|}{p-|\beta|} |(P_I +
  P_\beta) \cap \Z^n|\right)$$ 
\end{theorem}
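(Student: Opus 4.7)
The plan is to generalize \khov's compactified formula (Theorem~\ref{kgenus}) to the non-compact $Y$ by means of a toric stratification, following the strategy alluded to in the footnote of \cite{KhovanskiiGenus}. I fix the smooth projective toric compactification $X_\Sigma \supset \torus$ from Subsection~\ref{notation}, so that $\bar Y$ meets every torus orbit $O_\sigma$ transversally. Under the genericity assumptions, each nonempty intersection $\bar Y \cap O_\sigma$ is itself a non-degenerate complete intersection in $O_\sigma \cong \torus[n-\dim\sigma]$ associated to the face polytopes $F_\sigma(P_i)$; the open stratum ($\sigma=\{0\}$) recovers $Y$. Additivity of the $E$-polynomial (Theorem~\ref{thm:E-polynomial}) then yields
\[
E(Y;y,1) \;=\; E(\bar Y;y,1) \;-\; \sum_{\sigma \neq \{0\}} E(\bar Y \cap O_\sigma;\, y,1).
\]

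To evaluate the $\chi_y$-characteristic $\chi_y(\bar Y) := E(\bar Y;y,1)$ I need a refinement of Theorem~\ref{kgenus} that produces the entire polynomial in $y$, not only the arithmetic genus at $y=0$. This refinement can be obtained either by applying Hirzebruch--Riemann--Roch to the Koszul resolution of $\mathcal{O}_{\bar Y}$ on $X_\Sigma$ (expanded via the combinatorial Todd class of the smooth toric ambient) or by following \khov's own argument in \cite{KhovanskiiGenus} while tracking the full $y$-dependence. The outcome is an alternating sum over $I \subseteq [k]$ of $y$-polynomial expressions in the Ehrhart data of $P_I$, with binomial coefficients of shape $\binom{n+|I|}{\cdot}$ matching those of Theorem~\ref{main}. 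Applied recursively to each $\bar Y \cap O_\sigma$ (itself a generic complete intersection in a smaller torus with face polytopes $F_\sigma(P_i)$), this turns the additivity identity into a double sum over subsets $I$ and cones $\sigma \in \Sigma$ involving lattice-point data on the face polytopes $F_\sigma(P_I)$.

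The final step is to collapse the $\sigma$-sum into the Cayley-like $\beta$-sum of the theorem. For fixed $I$, summing $|\intr(F_\sigma(P_I))\cap\Z^n|$ over all cones $\sigma$ of the normal fan of $P_I$ and applying Ehrhart--Macdonald reciprocity \cite{Beck} to convert interior counts into full lattice-point counts assembles these contributions into the generating series
\[
\sum_{\beta \in \Z_{\geq 0}^I} y^{|\beta|}\, |(P_I + P_\beta)\cap \Z^n|,
\]
which is the weighted lattice-point generating function of the pointed rational polyhedron $(P_I \times \{0\}) + \sum_{i\in I} \mathrm{cone}(P_i \times \{e_i\}) \subset \R^{n+|I|}$---the Cayley construction for $(P_i)_{i\in I}$ based at $P_I$. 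Combining this with the toric factor $(y-1)^{n+|I|} = E(\torus[n+|I|];y,1)$ produced in the previous step and reading off the coefficient of $y^p$ with the sign $(-1)^{n-p}$ yields Theorem~\ref{main}.

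The main obstacle will be this last combinatorial step: pinning down the precise generating-function identity that converts the alternating sum over cones of the normal fan of $P_I$ (with reciprocity-induced signs) into the uniform Cayley-type sum over $\beta \in \Z_{\geq 0}^I$ weighted by $y^{|\beta|}$. All the other ingredients are relatively standard toric-geometric and Hodge-theoretic manipulations; the nontrivial content of the theorem resides in this ``Cayley--Ehrhart'' collapse.
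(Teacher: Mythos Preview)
Your strategy is genuinely different from the paper's, and it has real gaps.

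\textbf{What the paper does.} The paper does not touch the compactification $\bar Y$ at all. Instead it uses the \emph{Cayley trick}: the hypersurface $\tilde Z = \{1+\sum_i y_i f_i = 0\}$ in $\torus \times \C^k$ fibres over $\torus\setminus Y$ with fibre $\C^{k-1}$, so $E(\tilde Z)=(uv)^{k-1}\bigl((uv-1)^n - E(Y)\bigr)$. Stratifying $\tilde Z$ by which $y_i$ vanish writes it as a disjoint union of hypersurfaces $Z_I\subset(\cstar)^{n+|I|}$ whose Newton polytope is a lattice pyramid over the Cayley polytope $C_I$. For each $Z_I$ the Danilov--\khov\ hypersurface formula (the key Lemma~\ref{keylemma}) gives $e^{p,+}(Z_I)$ directly in terms of $\ehr(C_I;j)$. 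Summing and applying two Vandermonde-type identities yields the statement. The binomials $\binom{n+|I|}{p-|\beta|}$ arise because $Z_I$ lives in an $(n+|I|)$-torus; no compact formula and no normal-fan sum is needed.

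\textbf{Where your proposal falls short.} Two of the three steps you outline are not actually carried out, and each is nontrivial.
\begin{itemize}
\item[(i)] You assume a ``refinement of Theorem~\ref{kgenus}'' giving the full $\chi_y$-characteristic of the \emph{compact} complete intersection $\bar Y$, with the correct binomials $\binom{n+|I|}{\cdot}$ already in place. No such formula is quoted, and the HRR/Koszul computation on the $n$-dimensional $X_\Sigma$ you sketch would naturally produce intersection numbers on $X_\Sigma$ (hence binomials in $n$, not $n+|I|$); the shift by $|I|$ is exactly the Cayley phenomenon, and it does not come for free from Riemann--Roch on $X_\Sigma$. In effect you are assuming a result equivalent in strength to the theorem.
\item[(ii)] The ``recursion'' is ambiguous: the refined compact formula, if you had it, applies to $\bar Y$, not to the open strata $\bar Y\cap O_\sigma$. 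If instead you mean to invoke Theorem~\ref{main} inductively on those strata, then the base of the induction is again the unproved compact $\chi_y$-formula.
\item[(iii)] You yourself flag the final collapse (alternating sum over cones of $\Sigma$ with reciprocity signs $\Rightarrow$ Cayley-type $\beta$-sum) as ``the main obstacle'' and do not supply the identity. That is the entire combinatorial content of the theorem; without it the proposal is a plan, not a proof.
\end{itemize}

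In short, the paper's route via the Cayley trick plus the known hypersurface formula avoids precisely the two hard inputs your approach would need.
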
 
In particular, we get for $p=0$
$$e^{0,+}(Y) = \sum_{I \subseteq [k]} (-1)^{n-|I|} |P_I \cap \Z^n| =
(-1)^{n-k}\DMV(P_1, \ldots, P_k)$$ 
In other words,
$$(-1)^{\dim(Y)} E(Y; 0,1) = \DMV(P_1, \ldots, P_k) = \ME(P_1, \ldots,
P_k;1)$$ 

\begin{corollary}
The motivic arithmetic genus of a generic complete intersection in the
algebraic torus associated to a family of lattice polytopes is
nonnegative. The generic complete intersection is non-empty if and
only if the motivic arithmetic genus is positive.
\end{corollary}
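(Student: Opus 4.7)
The point of departure is the $p=0$ specialization of Theorem~\ref{main} displayed just above the corollary, namely
$$(-1)^{\dim(Y)}\, E(Y;0,1) \;=\; \DMV(P_1,\ldots,P_k).$$
Once this identity is in hand, both assertions of the corollary translate into statements about the discrete mixed volume.

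Nonnegativity of the motivic arithmetic genus is then immediate: Bihan~\cite{bihan-2014} proved $\DMV(P_1,\ldots,P_k) \ge 0$ for every tuple of lattice polytopes, and via the identity above this bound transfers verbatim to $(-1)^{\dim(Y)}E(Y;0,1)$.

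For the equivalence of non-emptiness and positivity, one direction is free: if $Y=\emptyset$, then $e^{p,q}(Y)=0$ for all $p,q$, so $E(Y;u,v)\equiv 0$ and the motivic arithmetic genus vanishes; contrapositively, strict positivity forces $Y\neq\emptyset$. The substantive direction is $Y\neq\emptyset\Rightarrow \DMV(P_1,\ldots,P_k)>0$. The plan is to match two combinatorial criteria formulated in terms of the subsums $P_J:=\sum_{i\in J}P_i$. On the algebraic side, a standard extension of Bernstein's theorem to non-square systems asserts that the generic complete intersection $Y$ is non-empty if and only if $\dim P_J \ge |J|$ for every $J\subseteq[k]$. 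On the combinatorial side, a refinement of Bihan's nonnegativity argument in~\cite{bihan-2014} identifies the same dimension condition as the criterion for strict positivity of $\DMV$: whenever $\dim P_J<|J|$ for some $J$, the alternating sum defining $\DMV(P_1,\ldots,P_k)$ telescopes to zero. Matching the two dichotomies completes the equivalence.

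The main obstacle is pinning down the vanishing locus of $\DMV$ combinatorially; even with nonnegativity in hand, establishing the sharp criterion $\DMV=0 \Leftrightarrow \exists J\colon\dim P_J<|J|$ requires revisiting the mechanism of Bihan's proof (which presents $\DMV$ as a count of lattice points in an auxiliary Cayley-type region). A purely algebro-geometric alternative would be to show directly that $(-1)^{\dim Y}E(Y;0,1)>0$ for any non-empty smooth closed subvariety $Y\subset\torus$, for example by extracting a distinguished nonzero contribution from the top-weight piece of $H^{\dim Y}_c(Y)$ via its mixed Hodge structure; but the combinatorial route above, which stays entirely on the $\DMV$ side, is the more economical plan.
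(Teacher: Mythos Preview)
Your strategy matches the paper's: reduce both assertions to facts about $\DMV$, invoke Bihan for nonnegativity, and for the equivalence match a combinatorial positivity criterion against a nonemptiness criterion for the generic complete intersection. The paper phrases the common criterion as the existence of linearly independent lattice segments $S_i\subseteq P_i$, which is equivalent (via a Rado/Hall argument, supplied in the paper by Schneider's Lemma~5.1.9) to your condition $\dim P_J\ge|J|$ for all $J$. Two points of divergence are worth flagging. First, the paper does \emph{not} extract the sharp criterion $\DMV>0\Leftrightarrow\dim P_J\ge|J|$ from Bihan's paper; it cites Theorem~3.17 of Jochemko--Sanyal~\cite{DMVals} for this, so your plan to ``revisit'' Bihan is replaced by an off-the-shelf reference. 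Second, for the algebro-geometric side the paper invokes Batyrev--Borisov (their $1$-independence condition and Theorem~3.3) rather than an unspecified ``standard extension of Bernstein's theorem''. Your claim that the failing case ``telescopes to zero'' is a bit optimistic---it is not a one-line cancellation in general---but since the paper outsources that direction to \cite{DMVals} as well, the point is moot.
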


\begin{proof}
Nonnegativity of the discrete mixed volume is the central result in
\cite{bihan-2014}. It remains to prove the second statement. In
Theorem~3.17 of \cite{DMVals} it is shown that the discrete mixed
volume of $P_1, \ldots ,P_n$ is positive if and only if there are
linearly independent segments $S_1 \subseteq P_1, \ldots, S_k
\subseteq P_k$ with vertices in $\Z^k$. By the proof of Lemma~5.1.9 in
\cite{Schneider} this is equivalent to $P_1, \ldots, P_k$ satisfying
the `1-independence' condition in Definition~3.1 in
\cite{Batyrev-Borisov}. Theorem~3.3 in \cite{Batyrev-Borisov} yields
that this condition is equivalent to the nonemptiness of the complete
intersection.
\end{proof}

Our proof of Theorem~\ref{main} follows directly the ideas outlined in
\cite{DanilovKhovanskii}. In this fundamental paper, a formula for the
$\chi_y$-characteristic was given in the case of $Y$ being a
hypersurface, and an algorithm on how to generalize from hypersurfaces
to complete intersections was described. Let us also remark that a
complete formula for the Hodge-Deligne polynomial of $Y$ in the case
of a hypersurface was given in
\cite[Theorem~3.24]{Inventiones}.

\section{Proof of Theorem~\ref{main}}
\label{sec2}

\subsection{Ehrhart theory}
\label{ehrhart}

Given a lattice polytope $P \subset \R^n$, the {\em Ehrhart
  polynomial} of $P$ is given by $\ehr_P(k) := |kP \cap \R^n|$ for $k
\in \Z_{\ge 0}$. The {\em Ehrhart generating function} is of the form 
\[\sum_{j=0}^\infty \ehr(P;j) t^j = \frac{\sum_{k=0}^n
    \hstar_k(P)}{(1-t)^{\dim(P)+1}},\]
where $\hstar_0(P), \ldots, \hstar_d \in \Z_{\ge 0}$, and $\hstar_k =
0$ for $k > \dim(P)$. For $k=0, \ldots, n$, we have the following
relation
\begin{equation}
  \label{ehr2hstar}
  \hstar_k(P) = \sum_{j=0}^k (-1)^{k-j} \binom{\dim(P)+1}{k-j} \ehr(P;j)
\end{equation}
We say for lattice polytopes $P,Q \subset \R^n$ that $P$ is a {\em
  lattice pyramid} over $Q$ if there is an affine-linear
transformation of $\R^n$ bijectively mapping $\Z^n$ onto
$\Z^n$ such that $Q$ is mapped onto $Q' \subset \R^{n-1} \times \{0\}
\subset \R^n$ and $P$ is mapped onto the convex hull of $Q'$ and $(0,
\ldots, 0, 1)$. The $\hstar$-coefficients are invariant under lattice
pyramid constructions.

Let $P_1, \ldots, P_k$ be given as in Subsection~\ref{notation}. For
$I \subseteq [k]$ we define the {\em Cayley polytope}
\[C_I = \cayley(P_i \suchthat i \in I)\] 
as the lattice polytope in $\R^{n+k}$ with vertices $P_i \times
\{e_i\}$ for $i \in I$ (and $C_\emptyset := \emptyset$). Then $C_I$ is
a lattice polytope of dimension $\dim C_I = \dim(P_I) + |I| -1$. For
$\alpha \in \Z_{\ge 0}^k$ we define $|\alpha| := \alpha_1 + \cdots +
\alpha_k$ and $\supp(\alpha) := \{i \in [k] \,:\, \alpha_i
\not=0\}$. We define the Minkowski sum
\[ P_\alpha := \sum_{i \in I} \alpha_i P_i \,. \]
With this notation we can compute the Ehrhart polynomial of $C_I$
as follows.
\begin{equation} \label{ehrCayley}
  \ehr(C_{I};j) = \sum_{%
    \substack{\alpha \in \Z_{\ge 0}^I \\
      |\alpha|=j}}
  |P_\alpha \cap \Z^n| =
  \sum_{%
    \substack{\alpha \in \Z_{\ge 0}^k \\
      |\alpha|=j,\, \supp(\alpha) \subseteq I}}
  |P_\alpha \cap \Z^n| 
\end{equation}

\subsection{Some binomial identities}

Let us recall the following binomial identities (e.g., see
Table 169 in \cite{Knuth}):

\begin{equation}
\sum_{s \in \Z} \binom{a}{q+s} \binom{b}{w+s} = \binom{a+b}{a-q+w}
\label{binom1}
\end{equation}
for $a,b \in \Z_{\ge 0}$ and $q,w \in \Z$.

\begin{equation}
\sum_{s=0}^{\infty} (-1)^s \binom{a}{s} \binom{b+s}{q} = (-1)^{a} \binom{b}{q-a}
\label{binom2}
\end{equation}
for $a \in \Z_{\ge 0}$ and $b,q \in \Z$.

\subsection{A special case}

The following result is the key situation in the proof of Theorem~\ref{main}.

\begin{lemma}
\label{keylemma}
Let $P \subset \R^d$ be a lattice pyramid over a lattice polytope $\emptyset \not= Q
\subset \R^d$. We denote by $Z \subset (\cstar)^d$ the generic
hypersurface associated to $P$. Then
$$e^{p,+}(Z)=(-1)^{d-1-p} \left(\binom{d}{p+1} +\sum_{j=0}^\infty
  (-1)^{j+1} \binom{d}{d-p+j-1} \ehr(Q;j)\right).$$ 
\end{lemma}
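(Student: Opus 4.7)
The plan is to use the pyramid structure of $P$ to realize $Z$ as the complement of a generic hypersurface $W$ in $(\C^*)^{d-1}$, apply \khov's $\chi_y$-characteristic formula for torus hypersurfaces to $W$, and then simplify by a single application of Pascal's rule.

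After the lattice-preserving affine transformation from the definition of a lattice pyramid, I may assume $Q\subset \R^{d-1}\times\{0\}$ and that $P$ is the convex hull of $Q$ with the apex $e_d=(0,\ldots,0,1)$. The only lattice point of $P$ outside $Q\times\{0\}$ is $e_d$, so a generic Laurent polynomial with Newton polytope $P$ takes the form
\[
  f \;=\; a\,x_d+g(x_1,\ldots,x_{d-1})
\]
with $a\in\C^*$ and $g$ a generic Laurent polynomial with Newton polytope $Q$. Projection onto the first $d-1$ coordinates is then an isomorphism from $Z=\{f=0\}$ onto $(\C^*)^{d-1}\setminus W$ where $W=\{g=0\}$, so by Theorem~\ref{thm:E-polynomial}(2)
\[
  E(Z)=(uv-1)^{d-1}-E(W),
\]
and hence
\[
  e^{p,+}(Z)=(-1)^{d-1-p}\binom{d-1}{p}-e^{p,+}(W).
\]

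Next, I apply \khov's formula (see \cite{DanilovKhovanskii}) to the generic hypersurface $W\subset(\C^*)^{d-1}$ associated to $Q$, expressing $e^{p,+}(W)$ as an alternating sum in $\ehr(Q;j)$ with binomial coefficients involving $d-1$; when $\dim Q<d-1$, one first splits off a torus factor $W=\widetilde W\times(\C^*)^{d-1-\dim Q}$ via Corollary~\ref{fiber} to reduce to the full-dimensional case, and the invariance $\hstar(P)=\hstar(Q)$ under lattice pyramid moves ensures the end result only involves $\ehr(Q;\cdot)$. Using \eqref{ehr2hstar} together with the binomial identities \eqref{binom1} and \eqref{binom2}, this can be brought into the form
\[
  e^{p,+}(W)=(-1)^{d-p}\left(\binom{d-1}{p+1}+\sum_{j\ge 0}(-1)^{j+1}\binom{d}{p+1-j}\ehr(Q;j)\right).
\]
Substituting this into the earlier display and applying Pascal's identity $\binom{d-1}{p}+\binom{d-1}{p+1}=\binom{d}{p+1}$, together with the symmetry $\binom{d}{p+1-j}=\binom{d}{d-p+j-1}$, produces the claimed formula.

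The main obstacle is bringing \khov's formula for $e^{p,+}(W)$ into the $\ehr(Q;\cdot)$-form displayed above, in particular treating the degenerate case $\dim Q<d-1$ uniformly. Once this is done, the remaining computation is a single Pascal step.
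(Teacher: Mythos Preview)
Your argument is correct, and the geometric step $Z\cong(\C^*)^{d-1}\setminus W$ via the apex coordinate is a nice observation that the paper does not use. The paper instead strips off the codimension torus directly from $Z$, writing $Z=Z'\times(\C^*)^c$ with $Z'\subset(\C^*)^{d-c}$ the full-dimensional hypersurface associated to $P$, and then plugs the Danilov--\khov\ formula $(-1)^{d-c-1}e^{m,+}(Z')=(-1)^m\binom{d-c}{m+1}+\hstar_{m+1}(P)$ into the convolution coming from the $(uv-1)^c$ factor; the pyramid invariance $\hstar(P)=\hstar(Q)$ is then used to pass to $\ehr(Q;\cdot)$ via \eqref{ehr2hstar}, and two applications of the Vandermonde identity \eqref{binom1} finish the job. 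Your route first projects to $W$ and only afterwards (if $\dim Q<d-1$) splits off $(\C^*)^c$ from $W$; from that point on the computation is literally the same Vandermonde manipulation as in the paper, just carried out for $W$ and $Q$ instead of $Z$ and $P$. The payoff of your version is that in the full-dimensional case $\dim Q=d-1$ no torus split is needed at all and the lemma follows from the Danilov--\khov\ formula for $W$ plus a single Pascal step. Two small remarks: in your approach the relevant $\hstar$-vector is $\hstar(Q)$ from the outset, so the pyramid invariance $\hstar(P)=\hstar(Q)$ is not actually needed; and only identity \eqref{binom1} is used, not \eqref{binom2}.
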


\begin{proof}
Let $\dim(P)=d-c$, so $\dim(Q)=d-1-c$. 
As $Z$ is generic, up to isomorphism we can assume that $Z = Z' \times
(\cstar)^c$ for $Z' \subset (\cstar)^{d-c}$ generic hypersurface
associated to $P \subset \R^{d-c}$. Therefore, $E(Z;u,v) = E(Z';u,v)
(uv-1)^c$. This implies
\begin{equation}
\label{step}
e^{p,+}(Z) = \sum_{m \in \Z} e^{m,+}(Z') (-1)^{m+c-p} \binom{c}{p-m}
\end{equation}
By \cite[4.6]{DanilovKhovanskii} we have for $m \ge 0$
$$(-1)^{d-c-1} e^{m,+}(Z') = (-1)^m \binom{d-c}{m+1} + \hstar_{m+1}(P).$$
Note that this equation also holds for $m < 0$.
Plugging this into \eqref{step} yields
$$e^{p,+}(Z)=\sum_{m \in \Z} \left[ (-1)^{d-c-1} \left( (-1)^m \binom{d-c}{m+1}
    + \hstar_{m+1}(P) \right) \right] (-1)^{m+c-p}\binom{c}{p-m}$$
$$= (-1)^{d-1-p} \left[\left(\sum_{m\in\Z}
    \binom{d-c}{m+1}\binom{c}{c-p+m}\right) + \sum_{m\in\Z} (-1)^m
  \binom{c}{p-m} \hstar_{m+1}(P)\right]$$

Binomial identity \eqref{binom1} shows that the expression in the
round parentheses evaluates to $\binom{d}{d-p-1} = \binom{d}{p+1}$. As
$\hstar_{m+1}(P)=\hstar_{m+1}(Q)$, by equation \eqref{ehr2hstar} the
previous expression equals
$$(-1)^{d-1-p} \left(\binom{d}{p+1} + \sum_{m\in\Z} (-1)^m
  \binom{c}{p-m} \sum_{j=0}^\infty (-1)^{m+1-j} \binom{d-c}{m+1-j}
  \ehr(Q;j)\right)$$
$$=(-1)^{d-1-p} \left[\binom{d}{p+1} + \sum_{j=0}^\infty (-1)^{j+1}
  \ehr(Q;j) \left(\sum_{m\in\Z} \binom{c}{c-p+m}
    \binom{d-c}{1-j+m}\right)\right]$$
Binomial identity \eqref{binom1} shows that the sum in the round
parentheses evaluates to $\binom{d}{p+1-j} = \binom{d}{d-p+j-1}$. This
finishes the proof.
\end{proof}

\subsection{Proof of Theorem~\ref{main}}

We follow the procedure described in \cite[6.2]{DanilovKhovanskii} on
how to reduce the computation of the Hodge-Deligne polynomials from
the complete intersection case to that of a hypersurface (also called
the Cayley trick).
Let $\tZ \subset \torus \times \C^{k}$ be the hypersurface of $F = 1+
\sum y_i f_i \in \C[x_1, \ldots, x_n, y_1, \ldots, y_k]$.
As $\tZ \to \torus \setminus Y$ is a bundle with fiber $\C^{k-1}$ over
$\torus \setminus Y$, we get from Corollary~\ref{fiber}
 $$ E(\tZ;u,v) = (uv)^{k-1} \left[ (uv-1)^n \ - \ E(Y;u,v) \right]$$
Therefore,
$$E(Y;u,v) = (uv-1)^n - \frac{1}{(uv)^{k-1}} E(\tZ;u,v)$$
This yields,
$$e^{p,+}(Y) = (-1)^{n-p} \binom{n}{p} - \sum_{q=0}^\infty e^{p+k-1,q+k-1}(\tZ).$$

As $(uv)^{k-1} | E(\tZ;u,v)$, we have $e^{p+k-1,q'}(\tZ)=0$ for $q' <
k-1$, thus
\begin{equation}
\label{eqeins}
e^{p,+}(Y) = (-1)^{n-p} \binom{n}{p} - e^{p+k-1,+}(\tZ).
\end{equation}

For $I \subseteq [k]$ let us define the strata
$$Z_I := \tZ \cap \{ y_j \neq 0 \suchthat j \in I \} \cap \{ y_j=0
\suchthat j \not\in I \}$$

Because $\tZ = \bigsqcup_I Z_I$ (and $Z_\emptyset = \emptyset$) we get 
\begin{equation}\label{eqzwei}
E(\tZ;u,v) = \sum_{\emptyset \not= I \subseteq [k]} E(Z_I;u,v).
\end{equation}
For $I \not=\emptyset$, by construction, $Z_I$ is the generic hypersurface in
$(\cstar)^{n+|I|}$ associated to a lattice pyramid over the Cayley
polytope $C_I \not=\emptyset$.
Now, we are in the special case of Lemma~\ref{keylemma}. Hence, we get
for $I \not=\emptyset$

$$e^{p+k-1,+}(Z_I) = (-1)^{n+|I|-p-k}
\left(\binom{n+|I|}{p+k}+\sum_{j=0}^\infty(-1)^{j+1}
  \binom{n+|I|}{n+|I|-p-k+j} \ehr(C_I;j)\right)$$

Therefore, from this equation together with \eqref{eqeins} and
\eqref{eqzwei}, we see that
\begin{equation}
\label{main-equation}
e^{p,+}(Y) = (-1)^{n-p} \binom{n}{p} + (-1)^{n+1-p-k}
\left(\textit{expression}_1 - \textit{expression}_2\right),
\end{equation}
where 
\begin{itemize}
\item $\textit{expression}_1$ equals 
$$\sum_{\emptyset \not= I \subseteq [k]} (-1)^{|I|} \binom{n+|I|}{p+k}$$
\item $\textit{expression}_2$ equals 
$$ \sum_{\emptyset \not= I \subseteq [k]} (-1)^{|I|} \sum_{j=0}^\infty
(-1)^{j} \binom{n+|I|}{p+k-j} \ehr(C_I;j)$$
\end{itemize}
Here, $\textit{expression}_1$ can be rewritten as
$$\left(\sum_{s=0}^\infty (-1)^s \binom{k}{s} \binom{n+s}{p+k}\right)
- \binom{n}{p+k}$$
which gets simplified by binomial expression \eqref{binom2} to 
\begin{equation}
\label{expr1-solved}
(-1)^k \binom{n}{p} - \binom{n}{p+k}
\end{equation}
It remains to consider $\textit{expression}_2$. By \eqref{ehrCayley}
it evaluates to $$\sum_{j=0}^\infty (-1)^{j} \sum_{\emptyset \not= I
  \subseteq [k]} (-1)^{|I|} \binom{n+|I|}{p+k-j} \sum_{|\alpha| = j,\;
  \supp \alpha \,\subseteq\, I}
\ehr(P_\alpha;1)$$ $$=\left(\sum_{\alpha \in \Z_{\ge0}^k}
  \ehr(P_\alpha; 1) (-1)^{|\alpha|} \sum_{[k] \supseteq I \supseteq
    \supp \alpha} (-1)^{|I|} \binom{n+|I|}{p+k-|\alpha|}\right) -
\binom{n}{p+k} $$
Let us define $t_\alpha := |\supp \alpha|$. By introducing the
counting variable $s = |I \setminus \supp \alpha|$, the previous
expression becomes
$$\left(\sum_{\alpha \in \Z_{\ge0}^k} \ehr(P_\alpha; 1) (-1)^{|\alpha|+t_\alpha}
\left[\sum_{s=0}^{k-t_\alpha} (-1)^s \binom{k-t_\alpha}{s}
  \binom{n+t_\alpha+s}{p+k-|\alpha|}\right] \right) - \binom{n}{p+k}$$
Binomial identity \eqref{binom2} shows that the sum in the brackets
evaluates to $(-1)^{k-t_\alpha}
\binom{n+t_\alpha}{p+t_\alpha-|\alpha|}$. Hence, we can rewrite the
previous expression as
$$\left(\sum_{\alpha \in \Z_{\ge0}^k}  (-1)^{k+|\alpha|}
\binom{n+t_\alpha}{p+t_\alpha-|\alpha|} \ehr(P_\alpha; 1)\right) - \binom{n}{p+k}$$
Writing $\alpha$ as the characteristic vector of its support plus a
vector $\beta$ reformulates $\textit{expression}_2$ as
\begin{equation}
\label{expr2-solved}
\left(\sum_{I \subseteq [k]} (-1)^{k-|I|} \sum_{\substack{\beta \in \Z_{\ge 0}^I}} (-1)^{|\beta|} \binom{n+|I|}{p-|\beta|} \ehr(P_I
  + P_\beta; 1)\right) - \binom{n}{p+k}
\end{equation}
Finally, plugging \eqref{expr1-solved} and \eqref{expr2-solved} into
\eqref{main-equation} and some simplification yields
$$e^{p,+}(Y) =(-1)^{n-p}\sum_{I \subseteq [k]} (-1)^{|I|} \sum_{\substack{\beta \in
    \Z_{\ge 0}^I}} (-1)^{|\beta|} \binom{n+|I|}{p-|\beta|} \ehr(P_I +
P_\beta; 1)$$

\hfill$\qed$

\subsubsection*{Acknowledgements}
This work started at August 24-26, 2015 at KTH and SU and was finished
at the Fields Institute Toronto during the program on Combinatorial
Algebraic Geometry. The authors would like to thank these institutions
for hospitality and financial support. BN is an affiliated researcher
with Stockholm University and partially supported by the
Vetenskapsr{\aa}det grant~NT:2014-3991.

\bibliographystyle{alpha}
\bibliography{hodge}

\begin{thebibliography}{HJKST15}

\bibitem[BB96a]{Inventiones}
Victor~V. Batyrev and Lev~A. Borisov.
\newblock Mirror duality and string-theoretic {H}odge numbers.
\newblock {\em Invent. Math.}, 126(1):183--203, 1996.

\bibitem[BB96b]{Batyrev-Borisov}
Victor~V. Batyrev and Lev~A. Borisov.
\newblock On {C}alabi-{Y}au complete intersections in toric varieties.
\newblock In {\em Higher-dimensional complex varieties ({T}rento, 1994)}, pages
  39--65. de Gruyter, Berlin, 1996.

\bibitem[Ber75]{Bernstein}
D.~N. Bernstein.
\newblock The number of roots of a system of equations.
\newblock {\em Funkcional. Anal. i Prilo\v zen.}, 9(3):1--4, 1975.

\bibitem[Bih16]{bihan-2014}
Fr{\'e}d{\'e}ric Bihan.
\newblock Irrational mixed decomposition and sharp fewnomial bounds for
  tropical polynomial systems.
\newblock {\em Discrete Comput. Geom.}, 55(4):907--933, 2016.

\bibitem[BR07]{Beck}
Matthias Beck and Sinai Robins.
\newblock {\em Computing the continuous discretely}.
\newblock Undergraduate Texts in Mathematics. Springer, New York, 2007.
\newblock Integer-point enumeration in polyhedra.

\bibitem[Del71]{DeligneMixedHodge}
Pierre Deligne.
\newblock Th\'eorie de {Hodge} {II}.
\newblock {\em Inst. Hautes Etudes Sci. Publ. Math.}, 40:5--57, 1971.

\bibitem[DK86]{DanilovKhovanskii}
V.~I. Danilov and A.~G. Khovanski{\u\i}.
\newblock Newton polyhedra and an algorithm for calculating {H}odge-{D}eligne
  numbers.
\newblock {\em Izv. Akad. Nauk SSSR Ser. Mat.}, 50(5):925--945, 1986.

\bibitem[GKP94]{Knuth}
Ronald~L. Graham, Donald~E. Knuth, and Oren Patashnik.
\newblock {\em Concrete mathematics}.
\newblock Addison-Wesley Publishing Company, Reading, MA, second edition, 1994.
\newblock A foundation for computer science.

\bibitem[Har77]{Hartshorne}
Robin Hartshorne.
\newblock {\em Algebraic geometry}.
\newblock Springer-Verlag, New York-Heidelberg, 1977.
\newblock Graduate Texts in Mathematics, No. 52.

\bibitem[HJKST15]{HJST-mixed-ehrhart}
Christian Haase, Martina Juhnke-Kubitzke, Raman Sanyal, and Thorsten Theobald.
\newblock Mixed {Ehrhart} polynomials.
\newblock Preprint, \href{http://arxiv.org/abs/1509.02254}{arxiv:1509.02254},
  Sep 2015.

\bibitem[JS16]{DMVals}
Katharina Jochemko and Raman Sanyal.
\newblock Combinatorial mixed valuations.
\newblock Preprint, arXiv:1605.07431, 2016.

\bibitem[Kho78]{KhovanskiiGenus}
A.~G. Khovanski{\u\i}.
\newblock Newton polyhedra, and the genus of complete intersections.
\newblock {\em Funktsional. Anal. i Prilozhen.}, 12(1):51--61, 1978.

\bibitem[Sch14]{Schneider}
R.~Schneider.
\newblock {\em Convex bodies: the {B}runn-{M}inkowski theory}, volume 151 of
  {\em Encyclopedia of Mathematics and its Applications}.
\newblock Cambridge University Press, Cambridge, 2014.

\bibitem[{Yok}11]{Yokura}
Shoji {Yokura}.
\newblock {Motivic characteristic classes.}
\newblock In {\em {Topology of stratified spaces. Based on lectures given at
  the workshop, Berkeley, CA, USA, September 8--12, 2008}}, pages 375--418.
  Cambridge: Cambridge University Press, 2011.

\end{thebibliography}

\end{document}